\theoremstyle{plain}
\newtheorem{theorem}{Theorem}[section]
\newtheorem{lemma}[theorem]{Lemma}
\newtheorem{corollary}[theorem]{Corollary}
\newtheorem*{theorem*}{Theorem}
\newtheorem*{claim*}{Claim}
\theoremstyle{definition}
\newtheorem{example}[theorem]{Example}
\newtheorem{remark}[theorem]{Remark}
\newcommand{\R}{\mathbb{R}}
\newcommand{\Z}{\mathbb{Z}}
\begin{document}

\title[Borsuk--Ulam theorems]{Borsuk--Ulam
theorems for products of spheres \\ and Stiefel manifolds revisited}



\author{Yu Hin Chan} 
\address[YC]{Dept.\ Math., University of California at Davis, Davis, CA}
\email{yuhchan@math.ucdavis.edu} 

\author{Shujian Chen}
\address[SC]{Dept.\ Math., Brandeis University, Waltham, MA}
\email{shujianchen@brandeis.edu} 

\author{Florian Frick}
\address[FF]{Dept.\ Math.\ Sciences, Carnegie Mellon University, Pittsburgh, PA}
\email{frick@cmu.edu} 

\author{J. Tristan Hull}
\address[JTH]{Dept.\ Math., University of California at Berkeley, Berkeley, CA}
\email{jth242@berkeley.edu}


\begin{abstract}
\small
We give a different and possibly more accessible proof of a general Borsuk--Ulam theorem for a product of spheres, originally due to Ramos. That is, we show the non-existence of certain $(\Z/2)^k$-equivariant maps from a product of $k$ spheres to the unit sphere in a real $(\Z/2)^k$-representation of the same dimension. Our proof method allows us to derive Borsuk–Ulam theorems for certain equivariant maps from Stiefel manifolds, from the corresponding results about products of spheres, leading to alternative proofs and extensions of some results of Fadell and Husseini.
\end{abstract}

\date{February 13, 2019, revised August 12, 2019}
\maketitle

\section{Introduction}

Let $X$ be a compact $n$-dimensional CW complex with an action by the group~$G$. A fundamental question with a multitude of applications in topological combinatorics is to decide whether an equivariant map $X \longrightarrow V$ (that is, a map commuting with a $G$-action) into some $n$-dimensional real $G$-representation $V$ must have $0 \in V$ in its image. Equivalently, one is interested in deciding the existence of an equivariant map $X \longrightarrow S(V)$ into the unit sphere of~$V$. This method has found applications in hyperplane mass partitions~\cite{blagojevic2018}, the ``square-peg'' problem~\cite{matschke2014}, Tverberg-type results~\cite{blagojevic2017}, and chromatic numbers of hypergraphs~\cite{matousek2002}, among others; see~\cite{matousek2008, zivaljevic2017}. Thus the identification of easily computable obstructions to the existence of such equivariant maps is of fundamental importance.

One incarnation of this problem that has received particular attention is the case that $X$ is a product of spheres $S^{n_1} \times \dots \times S^{n_k}$, and $G$ is $(\Z/2)^k$ where the $j$th copy of~$\Z/2$, generated by~$\varepsilon_j$, acts non-trivially exactly on the $j$th factor~$S^{n_j}$. The case $k=1$ is the classical Borsuk--Ulam theorem, which states that there is no map $S^n \longrightarrow S^{n-1}$ that commutes with the antipodal actions. Extensions of this result to products of spheres have been studied also because such maps naturally appear for the problem of equipartitions by hyperplanes; see for example~\cite{mani2006, blagojevic2018, simon2019}.

Here we study a binary-valued obstruction for the existence of equivariant maps: The parity of the sum of degrees of a specially extended map restricted to various submanifolds obstructs the existence of an equivariant map. This yields a possibly more accessible proof of Ramos' general Borsuk--Ulam theorem for products of spheres~\cite{ramos1996}. Moreover, our reasoning extends to Stiefel manifolds~$V_{n,k}$ of $k$ mutually orthonormal vectors in~$\R^n$. A classical result of Fadell and Husseini~\cite{fadell1988} establishes the non-existence of an equivariant map from $V_{n,k}$ into~$S(V)$, where $V$ is $(\R^{n-k})^k$ and $\varepsilon_j$ acts non-trivially on the $j$th copy of~$\R^{n-k}$. The difference in dimensions of $V_{n,k}$ and $(\R^{n-k})^k$ is~$\binom{k}{2}$, which leaves room for improvement of Fadell and Husseini's result. It seems that the following theorem has not been recorded before, although (as pointed by an anonymous referee) it is implicit in~\cite{fadell1988}, where the difference of dimensions is also exploited in a different way:

\begin{theorem}
\label{thm:main}
	Every $(\Z/2)^k$-equivariant map 
	$$V_{n,k} \longrightarrow \R^{n-1} \oplus \R^{n-2} \oplus \dots \oplus \R^{n-k}$$
	has a zero. Here $\varepsilon_j$ acts non-trivially precisely on the $j$th factor~$\R^{n-j}$ and by $(x_1, \dots, x_j, \dots, x_n) \mapsto (x_1, \dots, -x_j, \dots, x_n)$ on~$V_{n,k}$.
\end{theorem}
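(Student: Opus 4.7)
The plan is to argue by induction on $k$, combining the iterated sphere bundle structure of $V_{n,k}$ with Ramos' Borsuk--Ulam theorem for products of spheres proved in the preceding sections. For $k=1$, $V_{n,1}=S^{n-1}$ and the statement reduces to the classical Borsuk--Ulam theorem.

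For the inductive step fix $k\geq 2$ and assume the result for $V_{n,k-1}$. Suppose for contradiction that $f=(f_1,\dots,f_k)\colon V_{n,k}\to\bigoplus_{j=1}^{k}\R^{n-j}$ is $(\Z/2)^k$-equivariant and nowhere vanishing. Consider the projection $\pi\colon V_{n,k}\to V_{n,k-1}$ that forgets the last vector; it is the unit sphere bundle of the tautological rank-$(n-k+1)$ orthogonal complement bundle over $V_{n,k-1}$, with $\varepsilon_k$ acting as the fiberwise antipode. Restricted to any fiber $S^{n-k}$ of $\pi$, the last component $f_k$ is antipodally equivariant into $\R^{n-k}$, and so vanishes in an odd number of $\varepsilon_k$-orbits by the classical parametrized Borsuk--Ulam theorem. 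Hence the zero set $Z=f_k^{-1}(0)\subset V_{n,k}$ is a $(\Z/2)^k$-invariant closed subset, and $\pi|_Z$ is a surjection onto $V_{n,k-1}$ whose generic fibers have odd mod-$2$ cardinality modulo the $\varepsilon_k$-action. On $Z$, the remaining components $(f_1,\dots,f_{k-1})$ assemble to a $(\Z/2)^{k-1}$-equivariant, nowhere vanishing map into $\bigoplus_{j=1}^{k-1}\R^{n-j}$.

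The goal is then to transfer this non-vanishing from $Z$ down to $V_{n,k-1}$, using the parity-of-degrees machinery developed earlier for products of spheres: extend $\pi$ to the associated disk bundle, apply that machinery fiberwise, and use the odd mod-$2$ degree of $\pi|_Z/\varepsilon_k$ to obtain a $(\Z/2)^{k-1}$-equivariant nowhere vanishing map $V_{n,k-1}\to\bigoplus_{j=1}^{k-1}\R^{n-j}$, contradicting the inductive hypothesis. Iterated $k$ times this realizes $V_{n,k}$ as the tower of sphere bundles $V_{n,k}\to V_{n,k-1}\to\cdots\to V_{n,1}=S^{n-1}$ with fibers $S^{n-k},S^{n-k+1},\dots,S^{n-2}$, whose total degree count matches the one appearing in Ramos' theorem for the product $S^{n-1}\times\cdots\times S^{n-k}$. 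The main obstacle is precisely this transfer step: one must verify that the parity-of-degrees argument, originally designed for an actual product of spheres, carries through the disk bundle extension of each $S^{n-j}$-fibration in the Stiefel tower, and that the resulting mod-$2$ degrees behave additively across the fibrations exactly as they do for the direct product.
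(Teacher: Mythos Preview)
Your sketch leaves the essential difficulty unresolved. The ``transfer step'' you flag at the end --- producing from the nowhere-vanishing $(f_1,\dots,f_{k-1})|_Z$ on $Z=f_k^{-1}(0)$ a nowhere-vanishing $(\Z/2)^{k-1}$-equivariant map on $V_{n,k-1}$ --- is not a routine technicality but the entire content of the argument. Knowing that $\pi|_Z\colon Z\to V_{n,k-1}$ is surjective with generically odd mod-$2$ fiber cardinality does not by itself let you push a nonvanishing map down: there is no general mechanism for averaging or selecting along an odd-degree surjection that preserves nonvanishing, and the parity-of-degrees machinery developed here is set up specifically for actual products $S^{n_1}\times\cdots\times S^{n_k}$ (the recursion of Theorem~\ref{thm:compute} uses the product CW structure and restrictions to equatorial subspheres of individual factors), not for twisted sphere bundles. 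Your final paragraph acknowledges this, but what remains to be done is essentially a fiberwise version of the entire obstruction theory, which you have not supplied.

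The paper takes a different and much shorter route that avoids the Stiefel tower altogether. Rather than descending $V_{n,k}\to V_{n,k-1}\to\cdots$, it goes \emph{up}: embed $V_{n,k}\subset (S^{n-1})^k$, extend a hypothetical nowhere-zero equivariant $f\colon V_{n,k}\to V$ to an equivariant $f'\colon (S^{n-1})^k\to V$, and augment by the Gram entries to obtain
\[
F\colon (S^{n-1})^k\longrightarrow V\oplus\bigoplus_{|\alpha|=2}V_\alpha,\qquad (x_1,\dots,x_k)\longmapsto f'(x_1,\dots,x_k)\oplus(\langle x_i,x_j\rangle)_{i<j}.
\]
A zero of $F$ would force the $x_i$ to be orthonormal and $f$ to vanish there, so $F$ is nowhere zero; but the product-of-spheres obstruction $r(n-1,\dots,n-1;V\oplus\bigoplus_{|\alpha|=2}V_\alpha)$ is shown to equal~$1$ via Theorem~\ref{thm:compute} and Lemma~\ref{lem:reduction}, a contradiction. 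The missing $\binom{k}{2}$ dimensions are filled by the inner products $\langle x_i,x_j\rangle$ rather than by an inductive descent, so the whole argument lives in a genuine product of spheres where the degree machinery already applies.
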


We prove a more general result for arbitrary $(\Z/2)^k$-actions on the codomain; see Theorem~\ref{thm:main2}. We also show that the zeros of an equivariant map $V_{n,k} \longrightarrow (\R^{n-k})^k$ can be restricted to lie in a fixed submanifold of codimension~$\binom{k}{2}$; see Corollary~\ref{cor:main}.
Ramos' approach is elementary and technical, depending on equivariant approximations of sufficiently generic PL maps for appropriately defined triangulations. Some effort has been invested into simplifying his proofs. For example, a special case was proved by Dzedzej, Idzik, and Izydorek~\cite{dzedzej1999}. Further, we mention that some results of Ramos have been reproven since gaps have been pointed out in the treatment of non-free actions~\cite{blagojevic2018} (but not for the treatment of free actions as in the present manuscript). These  results have been salvaged by different methods~\cite{blagojevic2016}, and Vre\'cica and \v Zivaljevi\'c have proposed a supplement for Ramos' proof~\cite{vrecica2015}. Several authors have studied the existence of $(\Z/2)^k$-equivariant maps from products of spheres to a sphere using the theory of Fadell and Husseini, such as Mani-Levitska, Vre\' cica, and \v Zivaljevi\'c~\cite{mani2006}, Blagojevi\'c and Ziegler~\cite{blagojevic2011}, and Simon~\cite{simon2019}. For computations of Fadell and Husseini's cohomological index of Stiefel manifolds see Inoue~\cite{inoue2006} and Blagojevi\'c and Karasev~\cite{blagojevic2012}.

\section{Borsuk--Ulam theorems for products of spheres}

We denote the standard generators of $(\Z/2)^k$ by $\varepsilon_1, \dots, \varepsilon_k$. We think of $\Z/2 = \{0,1\}$ additively and write $\langle \alpha, \beta \rangle = \sum_j \alpha_j\beta_j \in \Z/2$ for the inner product in~$(\Z/2)^k$. For $\alpha \in (\Z/2)^k$ denote by $V_\alpha$ the vector space $\R$ with the action of $(\Z/2)^k$ where $\varepsilon_j$ acts non-trivially by $x \mapsto -x$ if $\langle \varepsilon_j, \alpha_i \rangle = 1$ and trivially otherwise. Denote the closed upper hemisphere of $S^{n_i}$ by~$B^{n_i}$ and let $B = B^{n_1} \times \dots \times B^{n_k}$. If the equivariant map $f\colon S^{n_1} \times \dots \times S^{n_k} \longrightarrow V$ is never zero on~$\partial B$, then $f$ induces a map $\widehat f \colon \partial B \longrightarrow S(V), x \mapsto f(x) / |f(x)|$. We will show that the parity of the degree of this map is independent of $f$ and only depends on the module~$V$ and the numbers $n_1, \dots, n_k$. We denote the degree of $\widehat f$ modulo $2$ by $r(n_1, \dots, n_k; V) \in \Z/2$. 

The degree of $\widehat f$ equivalently counts the number of zeros of $f$ in $B$ counted with signs and multiplicities. The notion of sign and multipicity here is captured by the local degree: Let $X$ and $Y$ be oriented closed $n$-dimensional manifolds, $x \in X$, and $f\colon X \longrightarrow Y$ a continuous map. Then $f$ induces a map $f_*\colon H_{n}(X, X\setminus \{x\}) \longrightarrow H_{n}(Y, Y \setminus \{f(x)\})$. Both the domain and codomain of this homomorphism are isomorphic to~$\Z$, and thus $f_*$ is uniquely determined by $d = f_*(1)$, the local degree $\deg f|_x$ of $f$ around~$x$. We refer to Outerelo and Ruiz for the basics of mapping degree theory~\cite{outerelo2009}. For example, they prove (see \cite[Prop.~4.5]{outerelo2009}):

\begin{lemma}
\label{lem:compute-deg}
	Let $W$ be a compact, oriented $(n+1)$-manifold with boundary~$X$. Let $f\colon W \longrightarrow \R^{n+1}$ be continuous with $f^{-1}(0)$ finite and disjoint from~$X$. Then the degree of the map $\widehat f\colon X \longrightarrow S^n, x \mapsto f(x) / |f(x)|$ is the sum of local degrees of $f$ around its zeros:
	$$\deg \widehat f = \sum_{x\in f^{-1}(0)} \deg f|_x.$$
\end{lemma}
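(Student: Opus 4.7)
The plan is to reduce the statement to the standard fact that a map from the boundary of a compact oriented manifold to a sphere of the same dimension bounds, and therefore has degree zero.

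First I would enumerate the zeros $f^{-1}(0) = \{p_1, \dots, p_m\}$ in the interior of $W$ (they are finite and disjoint from $X = \partial W$ by hypothesis). Around each $p_i$ I would pick a small closed coordinate ball $B_i \subset \mathrm{int}(W)$, small enough that the $B_i$ are pairwise disjoint and $f$ has no other zeros in $\bigcup_i B_i$. Let $W' = W \setminus \bigcup_i \mathrm{int}(B_i)$. Then $W'$ is a compact oriented $(n+1)$-manifold whose boundary, with induced orientation, is
$$\partial W' = X \sqcup \bigsqcup_{i=1}^m \bigl(-\partial B_i\bigr),$$
where the minus sign records that the induced boundary orientation on $\partial B_i$ from $W'$ is opposite to its orientation as the boundary of $B_i$.

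The key point is that on $W'$ the map $f$ is nowhere zero, so $\widehat f = f/|f|$ is defined on all of $W'$ as a continuous map to $S^n$. Therefore $\widehat f|_{\partial W'}\colon \partial W' \longrightarrow S^n$ extends continuously to an $(n+1)$-manifold. Since $\partial W'$ is a closed oriented $n$-manifold bounding $W'$, a standard result in degree theory (cf.\ \cite[Ch.~4]{outerelo2009}) implies that the degree of $\widehat f|_{\partial W'}$ is zero. By additivity of degree over disjoint unions of oriented manifolds, this gives
$$\deg \widehat f|_X - \sum_{i=1}^m \deg \widehat f|_{\partial B_i} = 0.$$

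Finally, I would identify each summand $\deg \widehat f|_{\partial B_i}$ with the local degree $\deg f|_{p_i}$. This is essentially the definition: under the excision isomorphism, the map induced by $f$ on $H_n(W,W\setminus\{p_i\}) \cong H_n(B_i, \partial B_i)$ is dual to the degree of $\widehat f$ on $\partial B_i \to S^n$ (after using a local chart that identifies a neighborhood of $p_i$ with a standard ball in $\R^{n+1}$ and a neighborhood of $0$ in the target with $\R^{n+1}$). Putting the pieces together yields the claimed identity $\deg \widehat f = \sum_{p \in f^{-1}(0)} \deg f|_p$.

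The only subtle point is the ``extension implies degree zero'' step, since $W'$ has one dimension more than the target sphere; I would justify it by noting that the fundamental class $[\partial W']\in H_n(\partial W')$ is the boundary of the relative class $[W',\partial W']\in H_{n+1}(W',\partial W')$, so $\widehat f_*[\partial W'] = \partial\widehat f_*[W',\partial W'] = 0$ in $H_n(S^n)$ because $H_{n+1}(S^n)=0$. Everything else is bookkeeping with orientations and local charts.
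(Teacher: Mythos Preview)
The paper does not supply its own proof of this lemma: it quotes the statement from Outerelo--Ruiz \cite[Prop.~4.5]{outerelo2009} and, for the continuous (rather than smooth with regular value~$0$) setting, refers to \cite[Proof of Lemma~5.6]{blagojevic2018}. So there is nothing in the paper to compare your argument against.

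That said, your argument is exactly the standard excision/bounding proof one finds in those references, and it is correct. One small slip: in your identification of the local degree with $\deg \widehat f|_{\partial B_i}$ you wrote $H_n(W,W\setminus\{p_i\}) \cong H_n(B_i,\partial B_i)$, but since $W$ and the balls $B_i$ are $(n+1)$-dimensional the relevant groups are $H_{n+1}(W,W\setminus\{p_i\}) \cong H_{n+1}(B_i,\partial B_i)$; the passage to the degree of $\widehat f|_{\partial B_i}$ then goes through the connecting homomorphism $H_{n+1}(B_i,\partial B_i)\to H_n(\partial B_i)$ (and similarly $H_{n+1}(\R^{n+1},\R^{n+1}\setminus\{0\})\to H_n(S^n)$), both of which are isomorphisms. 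With that correction the bookkeeping goes through as you describe.
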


While in~\cite{outerelo2009} this lemma is stated in the smooth category for maps with regular value~$0$, it is simple to see that the lemma holds in this slightly more general setting. See for instance~{\cite[Proof of Lemma~5.6]{blagojevic2018}} for a proof.

As a second ingredient we need that if the action on the domain is free, any two $G$-equivariant maps have congruent degrees modulo the order of~$G$; see Kushkuley and Balanov~{\cite[Cor.~2.4]{kushkuley2006}}:

\begin{theorem}
\label{thm:congruent-deg}
    Let $X$ and $Y$ be closed oriented $n$-dimensional manifolds with actions by the finite group~$G$, such that the $G$-action on $X$ is free. Then for any two equivariant maps $f_1, f_2\colon X \longrightarrow Y$ their degrees are congruent modulo~$|G|$: $$\deg f_1 \equiv \deg f_2 \mod |G|.$$
\end{theorem}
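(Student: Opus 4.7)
The plan is to show that any two equivariant maps differ by a finite chain of equivariant modifications, each altering the degree by a multiple of $|G|$. After equivariant smoothing, degrees are computed by counting signed preimages of a common regular value.

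A preliminary orientation reduction simplifies the later bookkeeping. Writing $\operatorname{sign}(g|_X) \in \{\pm 1\}$ for the sign with which $g \in G$ acts on the orientation of $X$, and similarly for $Y$, the relation $f = g^{-1} \circ f \circ g$ forced by equivariance gives $\deg f = \operatorname{sign}(g|_X)\operatorname{sign}(g|_Y)\deg f$. If the character $\varepsilon(g) := \operatorname{sign}(g|_X)\operatorname{sign}(g|_Y)$ is $-1$ for some $g$, every equivariant map has degree zero and the congruence is trivial; we may therefore assume $\varepsilon \equiv 1$ on $G$.

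Next comes the basic equivariant modification. By compactness of $X$ and freeness of the $G$-action, there is a small open ball $B \subset X$ with $gB \cap B = \emptyset$ for $g \neq e$; write $GB = \bigsqcup_{g \in G} gB$. Given equivariant $f\colon X \longrightarrow Y$ and any continuous $\varphi\colon \overline{B} \longrightarrow Y$ agreeing with $f$ on $\partial B$, set $\tilde f|_{X \setminus GB} = f$ and $\tilde f(gx) = g\varphi(x)$ for $x \in B$. The map $\tilde f$ is equivariant and continuous. Picking a regular value $y \in Y$ of both $f$ and $\tilde f$ whose translates $g^{-1}y$ are all missed by $f|_{\partial B}$, the orientation reduction forces the signed preimage count inside each $gB$ to change by the same integer $k$ (namely, the degree of the ``interior change'' $\varphi$ versus $f|_B$ at the corresponding regular value), so $\deg \tilde f - \deg f = |G| \cdot k$.

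Finally, $f_1$ is connected to $f_2$ by a finite sequence of such modifications and equivariant homotopies. Fix a $G$-CW structure on $X$ whose cells lie in free $G$-orbits, and inductively over the skeleta adjust the current map, orbit of cells by orbit of cells, to agree with $f_2$, always extending equivariantly from a single representative cell. Each adjustment is either an equivariant homotopy (degree unchanged) or a basic modification (degree changes by a multiple of $|G|$). The main obstacle is making the cell-by-cell interpolations extend continuously across cell boundaries; this is routine equivariant obstruction theory, clean because every cell of $X/G$ lifts to $|G|$ cells in $X$, so any integral obstruction to extending an equivariant homotopy appears in multiples of $|G|$ and can be absorbed by a further basic modification. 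The cumulative degree change is therefore a multiple of $|G|$, yielding $\deg f_1 \equiv \deg f_2 \pmod{|G|}$.
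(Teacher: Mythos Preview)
The paper does not prove this theorem; it is quoted from Kushkuley and Balanov without argument, so there is no ``paper's proof'' to compare against. Your first two steps are fine: the orientation reduction is correct, and the basic modification argument correctly shows that changing an equivariant map on a single $G$-orbit of $n$-balls alters the degree by a multiple of~$|G|$ (the point being that the ``gluing sphere'' $S^n \to Y$ measuring the change has a well-defined global degree independent of the regular value, and the copies in the various $gB$ differ by $\varepsilon(g) = 1$).

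Step~3, however, does not work as written. Your claim is that $f_1$ can be transformed into $f_2$ by a finite chain of equivariant homotopies and basic modifications, but this is false in general. Take $G = \Z/2$ acting freely on $X = T^2$ by a translation (so $X/G \cong T^2$) and trivially on $Y = T^2$. Equivariant maps $X \to Y$ correspond to maps $T^2 \to T^2$, and there are infinitely many non-homotopic such maps, distinguished already by their action on $\pi_1$. Since $\pi_2(T^2) = 0$, a basic modification on an orbit of $2$-balls does not change the homotopy class at all; and of course equivariant homotopies do not either. So your chain of moves can never leave the equivariant homotopy class of~$f_1$, yet $f_1$ and $f_2$ need not be equivariantly homotopic. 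More concretely, you cannot ``inductively adjust $f_1$ to agree with $f_2$ on the skeleta'': already on the $1$-skeleton $f_1$ and $f_2$ may induce different maps on $\pi_1$, which no homotopy can repair. Your appeal to obstruction theory is misdirected---the obstruction to an equivariant homotopy over an $(i{+}1)$-cell orbit lives in $\pi_i(Y)$ via $C^{i+1}(X/G;\pi_i(Y))$, and the freeness of the action does not force it to be a multiple of~$|G|$; nor can a modification on an $n$-ball touch it.

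What actually works (and is essentially the argument in Kushkuley--Balanov) avoids trying to connect $f_1$ to $f_2$ on~$X$. Instead one works on the $(n{+}1)$-manifold $X \times I$: give $X/G \times I$ a CW structure with a single top cell, lift it, and extend $f_1 \sqcup f_2$ equivariantly from $\partial(X\times I)$ over the $n$-skeleton. The only genuine obstruction lies on the single orbit of $(n{+}1)$-cells and is an element of $\pi_n(Y)$ whose Hurewicz image computes $\deg f_1 - \deg f_2$; the $|G|$ cells in the orbit contribute equally by your Step~1 reduction, giving the desired divisibility. The lower-dimensional extensions here genuinely can be made (one is extending a single map, not a homotopy between two prescribed maps), which is the distinction your Step~3 misses.
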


For $x \in S^n \subset \R^{n+1}$ we denote the $i$th coordinate of~$x$ by~$e_i^*(x)$. We fix the upper hemisphere $B^n \subset S^n$ as the set of $x \in S^n$ with $e_{n+1}^*(x) \ge 0$. We can now prove that $r(n_1, \dots, n_k; V)$ is indeed independent of~$f$:

\begin{lemma}
	Let $f \colon S^{n_1} \times \dots \times S^{n_k} \longrightarrow V$ be a $(\Z/2)^k$-equivariant map that is never zero on~$\partial B$ and has only isolated zeros. Then the degree modulo $2$ of the induced map $\widehat f \colon \partial B \longrightarrow S(V)$ is independent of~$f$.
\end{lemma}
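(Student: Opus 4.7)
The plan is to compute $\deg\widehat f \bmod 2$ as a mod-$2$ count of zeros of~$f$ inside the fundamental domain~$B$ via Lemma~\ref{lem:compute-deg}, and then to compare two equivariant maps $f_1,f_2$ by a generic equivariant cobordism whose zero set on $[0,1]\times\partial B$ cancels out mod~$2$ in pairs under the $(\Z/2)^k$-action.

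First I would reduce to the transverse case. Set $M = S^{n_1}\times\cdots\times S^{n_k}$. Since the $(\Z/2)^k$-action on~$M$ is free, standard equivariant transversality allows us to perturb each $f_i$ to a smooth equivariant $g_i\colon M \to V$ that is transverse to~$0$ and still nonzero on~$\partial B$, and for which $\widehat{g_i}$ is equivariantly homotopic to $\widehat{f_i}$ in $V\setminus\{0\}$ along the straight line; hence $\deg\widehat{f_i} = \deg\widehat{g_i}$. As $g_i$ is transverse to~$0$, each local degree at a zero equals $\pm 1$, so Lemma~\ref{lem:compute-deg} gives
$$\deg\widehat{g_i} \;\equiv\; |g_i^{-1}(0)\cap B| \pmod 2,$$
reducing the lemma to $|g_1^{-1}(0)\cap B| \equiv |g_2^{-1}(0)\cap B| \pmod 2$. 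For this I would consider the equivariant homotopy $F(t,x) = (1-t)g_1(x) + t g_2(x)$ and perturb it equivariantly in the interior of $[0,1]$ to be transverse to~$0$, keeping $F_0=g_1$ and $F_1=g_2$ fixed. Then $Z:=F^{-1}(0)$ is a properly embedded $(\Z/2)^k$-invariant $1$-manifold in $[0,1]\times M$, and $Z\cap([0,1]\times B)$ is a compact $1$-manifold whose boundary splits into $\{0\}\times(g_1^{-1}(0)\cap B)$, $\{1\}\times(g_2^{-1}(0)\cap B)$, and $Z_\partial := Z\cap([0,1]\times\partial B)$. A compact $1$-manifold has an even number of boundary points, so the proof reduces to showing $|Z_\partial|$ is even.

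This last step is the main delicate point, because $\partial B$ itself is not $(\Z/2)^k$-invariant; only the larger equator locus $\{x\in M : e_{n_i+1}^*(x_i)=0 \text{ for some } i\}$ is. Here I would exploit the stratification: the subset of~$M$ where two or more coordinates lie on their equators has codimension at least~$2$ in $[0,1]\times M$, and so is generically avoided by the $1$-dimensional~$Z$ after a further small perturbation of~$F$. Thus for every $(t,x)\in Z_\partial$ exactly one coordinate $x_i$ sits on its equator while all other $x_j$ lie in the open upper hemisphere. For such~$x$ only the two orbit elements $x$ and $\varepsilon_i\cdot x$ land in~$\partial B$, since any group element with a nonzero entry away from the $i$th slot moves some $x_j$ into the open lower hemisphere; by equivariance $F(t,\varepsilon_i x) = \varepsilon_i F(t,x) = 0$, so $(t,\varepsilon_i x)\in Z_\partial$ as well. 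Hence $(t,x)\mapsto(t,\varepsilon_i x)$ is a well-defined fixed-point-free involution of~$Z_\partial$, forcing $|Z_\partial|$ to be even and completing the proof. The principal technical hurdle is arranging simultaneous transversality of~$F$ to the zero section and to the equator stratification, but since the $(\Z/2)^k$-action is free everywhere, this is a standard generic-position argument.
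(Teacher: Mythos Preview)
Your argument is correct and takes a genuinely different route from the paper's. The paper does not use a cobordism between $f_1$ and $f_2$ at all; instead it restricts a single $f$ to the invariant hypersurface $S^{n_1-1}\times S^{n_2}\times\cdots\times S^{n_k}$, invokes Theorem~\ref{thm:congruent-deg} (the Kushkuley--Balanov congruence $\deg f_1\equiv\deg f_2\pmod{|G|}$ for equivariant maps from a free $G$-manifold) to fix that degree mod~$2^k$, and then runs an induction on the number of generators $\varepsilon_j$ that act with opposite orientation signs on domain and codomain, at each step enlarging $n_1$ by one and appending a $V_{\varepsilon_1+\varepsilon_\ell}$ summand. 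Your approach replaces this machinery with a direct mod-$2$ zero count: the $1$-manifold $Z\cap([0,1]\times B)$ does the bookkeeping, and the fixed-point-free involution $(t,x)\mapsto(t,\varepsilon_i x)$ on the side boundary~$Z_\partial$ is your substitute for the orientation analysis. This is more self-contained (no appeal to Theorem~\ref{thm:congruent-deg}) and arguably the natural proof of a purely mod-$2$ statement; the paper's route, by contrast, extracts information mod~$2^k$ along the way, though only the parity is ultimately used. One small point worth making explicit in your write-up: the hypothesis that each $g_j$ is nonzero on~$\partial B$, together with equivariance, forces $g_j^{-1}(0)$ to miss the entire equator locus $\bigcup_{g\in(\Z/2)^k} g\cdot\partial B$, which is what guarantees that the endpoints $\{0,1\}\times(g_j^{-1}(0)\cap B)$ are disjoint from~$Z_\partial$ and that the boundary of your $1$-manifold really does split cleanly into the three pieces you list.
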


\begin{proof}
	The degree of $\widehat f$ is equal to the sum of zeros of $f$ in $B$ counted with sign and multiplicity by Lemma~\ref{lem:compute-deg}. Denote by $M = S^{n_1-1} \times S^{n_2} \times \dots \times S^{n_k}$. The degree modulo $2^k = |(\Z/2)^k|$ of the map $f|_M$ is independent of~$f$ by Theorem~\ref{thm:congruent-deg}. This degree counts the number of zeros of $f$ in $W = B^{n_1} \times S^{n_2} \times \dots \times S^{n_k}$ with signs and multiplicities, again by Lemma~\ref{lem:compute-deg}. Every zero of $f$ in $B$ occurs $2^{k-1}$ times in~$W$ by symmetry. If these $2^{k-1}$ symmetric copies of a zero in $B$ all have the same sign---this is the case if each $\varepsilon_j$, $j \ge 2$, preserves orientation on $M$ if and only if it preserves the orientation of~$S(V)$---then $\deg f|_M$ is the sum of zeros of $f$ in $B$ multiplied by~$2^{k-1}$. Since $\deg f|_M$ modulo $2^k$ is independent of~$f$, so is the parity of $\deg \widehat f$. 
	
	We now induct on the number of generators $\varepsilon_j$ that act in opposite ways on the orientation of $M$ and~$S(V)$. Suppose we have already shown that no matter how the generators $\varepsilon_j$, $2 \le j \le \ell -1$ act on the orientations of $M$ and~$S(V)$, the parity of the degree $\deg \widehat f$ is independent of~$f$. Further, assume that $\varepsilon_\ell$ acts orientation-preservingly on~$M$ and orientation-reversingly on~$S(V)$ or vice versa. Let $M' = S^{n_1+1} \times S^{n_2} \times \dots \times S^{n_k}$ and $B' = B^{n_1+1} \times \dots \times B^{n_k}$. Extend $f$ equivariantly to a map $f'\colon M' \longrightarrow V$. In this extension process ensure that $f'$ has only finitely many zeros $x = (x_1, \dots, x_k)$ with $e_1^*(x_1)e_1^*(x_\ell) = 0$. This is possible since $f$ has no zeros with $e_1^*(x_1) =0 $ or $e_1^*(x_\ell) = 0$, and any map $S^{n-1} \longrightarrow S^{n-1}$ can be extended to the entire ball $B^n \longrightarrow \R^n$ such that it only has finitely many zeros. Now consider the map 
	$$F\colon M' \longrightarrow V \oplus V_\alpha, \qquad (x_1, \dots, x_k) \mapsto (f'(x_1, \dots, x_k), e_1^*(x_1)e_1^*(x_\ell)),$$
	where $\alpha = \varepsilon_1 + \varepsilon_\ell$. The map $F$ is equivariant and has only isolated zeros. Further, by perhaps slightly rotating some of the spheres in the domain $M'$, we can guarantee that $F$ has no zeros in~$\partial B'$, since $F$ has only finitely many zeros. 
	
	Now $\varepsilon_\ell$ acts in the same way on the orientations of $M'$ and $S(V \oplus V_\alpha)$, since $\varepsilon_\ell$ acts in opposite ways on $S(V)$ and $S(V\oplus V_\alpha)$. Thus by induction the parity of the degree of the induced map $\widehat F\colon \partial B' \longrightarrow S(V \oplus V_\alpha)$ does not depend on~$F$. This counts the zeros of $F$ in~$B'$ with signs and multiplicities. These zeros are precisely the zeros of $f$ in $B$ and the zeros of $F$ in $B^{n_1+1} \times B^{n_2} \times \dots \times B^{n_\ell -1} \times \dots \times B^{n_k}$. But the parity of the latter number of zeros does not depend on $F$ by induction. Thus the parity of the number of zeros of $f$ in $B$ does not depend on $f$ either.
\end{proof}

That the parity of $\deg \widehat f$ is independent of~$f$ could also be derived as a consequence of elementary obstruction theory. We briefly sketch this argument and refer to tom Dieck~\cite{tomdieck2011} for the basics of (equivariant) obstruction theory. Let $X$ be an $n$-dimensional CW complex with a free cellular $G$-action. Denote the $k$-skeleton of~$X$ by~$X^{(k)}$. Let $Y$ be an $(n-2)$-connected, $(n-1)$-simple $G$-space. Then there is a $G$-map $h \colon X^{(n-1)} \longrightarrow Y$. Whether $h$ can be extended (up to homotopy on~$X^{(n-2)}$) to a $G$-map defined on all of~$X$ is captured by the obstruction cocycle $\mathfrak o \in H^n_G(X; \pi_{n-1} Y)$. In the situation described here---a primary obstruction problem---the cohomology class $\mathfrak o$ is independent of the map $h$, and a $G$-map $X \longrightarrow Y$ exists if and only if the cohomology class $\mathfrak o$ vanishes.

If $Y = S^{n-1}$ then the value (of a representative) of $\mathfrak o$ on an $n$-cell $\sigma$ of $X$ is the degree of $h$ restricted to~$\partial \sigma$. A sphere has a $\Z/2$-equivariant CW structure with two cells in each dimension. This induces a CW complex structure on a product of spheres that is equivariant with respect to~$(\Z/2)^k$. This CW structure has one orbit of top-dimensional $n$-cells, and thus $\deg \widehat f$ determines~$\mathfrak o$. Each orbit of $(n-1)$-cells intersects the boundary of a fixed $n$-cell in an even number of cells. Since each equivariant $(n-1)$-cochain has the same value up to signs on the cells of the same orbit, the parity of the value of $\mathfrak o$ on each $n$-cell is well-defined (and thus independent of the map~$h$).

\begin{lemma}
\label{lem:zeros}
	Let $f\colon S^{n_1} \times \dots \times S^{n_k} \longrightarrow V$ be an equivariant map that is never zero on $\partial B$, has only isolated zeros, and is a local homeomorphism around zeros. Then $r(n_1,\dots,n_k;V) = 1$ if and only if $f$ has an odd number of zeros in~$B$. Moreover, if an equivariant map $S^{n_1} \times \dots \times S^{n_k} \longrightarrow S(V)$ exists, then $r(n_1,\dots,n_k;V) = 0$.
\end{lemma}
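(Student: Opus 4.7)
My plan is to deduce both parts of the lemma directly from Lemma~\ref{lem:compute-deg} together with the just-established well-definedness of $r(n_1,\dots,n_k;V)$. For the first claim, I would apply Lemma~\ref{lem:compute-deg} to $W = B$, whose boundary is $\partial B$: the set $f^{-1}(0)\cap B$ is finite (isolated zeros inside the compact $B$) and disjoint from $\partial B$ by hypothesis, so the lemma yields
\[
\deg \widehat f \;=\; \sum_{x\in f^{-1}(0)\cap B} \deg f|_x .
\]
Because $f$ is a local homeomorphism at each of its zeros, each local degree $\deg f|_x$ equals $\pm 1$. Reducing modulo $2$ then gives $r(n_1,\dots,n_k;V) \equiv \#(f^{-1}(0)\cap B) \pmod 2$, which is precisely the first assertion.

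For the second claim, I would take the hypothetical equivariant map $g\colon S^{n_1}\times\cdots\times S^{n_k} \to S(V)$ and view it, via the inclusion $S(V)\hookrightarrow V$, as an admissible test map into $V$: it is equivariant, never zero on $\partial B$ (indeed never zero anywhere), and its zero set is vacuously isolated. The induced map $\widehat g\colon \partial B \to S(V)$ is simply $g|_{\partial B}$, which extends continuously over all of $B$ via $g|_B$. Since $B$ is contractible, $\widehat g$ is null-homotopic in $S(V)$ and therefore has degree $0$. By the well-definedness of $r$ (i.e., the preceding lemma), we conclude $r(n_1,\dots,n_k;V) = 0$.

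I do not expect a genuine obstacle here: the proof is essentially an unpacking of the definition of $r$ together with a single invocation of the local-degree formula, and the hypothesis that $f$ is a local homeomorphism around its zeros is exactly what is needed to pass from a signed count to a mod-$2$ count. The one technicality I would be mindful of is that $B$ is a manifold with corners rather than a smooth manifold with boundary, so Lemma~\ref{lem:compute-deg} is being applied slightly outside its literal hypotheses; this can be handled by smoothing the corners or by appealing to the topological version of degree theory, as the authors already indicate in their remark following that lemma.
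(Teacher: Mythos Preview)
Your proposal is correct and follows essentially the same approach as the paper: both parts are deduced directly from Lemma~\ref{lem:compute-deg}, using that a local homeomorphism has local degree $\pm 1$ for the first claim. For the second claim the paper simply invokes Lemma~\ref{lem:compute-deg} again (the sum over an empty zero set is~$0$), whereas you phrase this as a null-homotopy argument; these are two sides of the same coin.
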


\begin{proof}
	Since $f$ is a local homeomorphism around zeros, every local degree $\deg f|_x$ for $x$ with $f(x) = 0$ is $\pm 1$. By Lemma~\ref{lem:compute-deg} the sum $\sum_{x\in f^{-1}(0) \cap B} \deg f|_x$ is odd if and only if $r(n_1,\dots,n_k;V) = 1$. This is the case precisely if $f^{-1}(0) \cap B$ has an odd number of elements. The second statement is an immediate consequence of Lemma~\ref{lem:compute-deg}.
\end{proof}

It is now an elementary exercise to prove the following result, originally due to Ramos~\cite{ramos1996}. 

\begin{theorem}[Ramos~{\cite[Section 3]{ramos1996}}]
\label{thm:compute}
	The value of $r(n_1, \dots, n_k; V)$ can be computed recursively via
	\begin{align}
	 r(n_1, \dots,n_k; \bigoplus_{i=1}^n V_{\alpha_i}) = \sum_{j=1}^k \langle \alpha_n, \varepsilon_j \rangle r(n_1, \dots, n_j -1, \dots, n_k;  \bigoplus_{i=1}^{n-1} V_{\alpha_i}).
	\end{align}
\end{theorem}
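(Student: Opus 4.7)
The strategy is to exhibit a particular equivariant map $f$ whose zeros in $B$ can be enumerated in terms of smaller values of $r$, then apply Lemma~\ref{lem:zeros}. Write $f = (g, f_n)$, where $g \colon S^{n_1} \times \cdots \times S^{n_k} \longrightarrow \bigoplus_{i=1}^{n-1} V_{\alpha_i}$ is an equivariant map chosen generically, and the last coordinate is
\[
f_n(x_1, \dots, x_k) \;=\; \prod_{j\,:\, \langle \alpha_n, \varepsilon_j \rangle = 1} e_1^*(x_j).
\]
Since the antipodal action of $\varepsilon_j$ on $S^{n_j}$ flips $e_1^*(x_j)$, the product $f_n$ is multiplied by $-1$ under $\varepsilon_j$ exactly when $\langle \alpha_n, \varepsilon_j \rangle = 1$, so $f_n$ is equivariant into $V_{\alpha_n}$.

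A point $x \in B$ is a zero of $f$ if and only if $g(x) = 0$ and $e_1^*(x_j) = 0$ for some $j$ with $\langle \alpha_n, \varepsilon_j \rangle = 1$. For each such $j$, the stratum $E_j := \{x \in B : e_1^*(x_j) = 0\}$ is a $(\Z/2)^k$-invariant sub-hemisphere-product naturally identified with $B^{n_1} \times \cdots \times B^{n_j - 1} \times \cdots \times B^{n_k}$, on which $\varepsilon_j$ still acts as the antipodal map on the smaller sphere $S^{n_j - 1}$. Hence $g|_{E_j}$ is exactly the type of equivariant map whose parity count of zeros in the hemisphere-product is, by Lemma~\ref{lem:zeros}, given by $r(n_1, \dots, n_j - 1, \dots, n_k; \bigoplus_{i=1}^{n-1} V_{\alpha_i})$.

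I would choose $g$ generic enough that (i) its zero locus avoids each codimension-$2$ intersection $E_j \cap E_{j'}$ (such intersections have dimension $n-2$, while $g^{-1}(0)$ is generically $1$-dimensional in $B$, so they miss); (ii) on each $E_j$ the zeros of $g|_{E_j}$ are isolated, with $g|_{E_j}$ a local homeomorphism around them; and (iii) $f$ has no zero on $\partial B$. Near a zero $x$ lying in exactly one stratum $E_j$, the coordinate $f_n$ is locally a nonzero constant times $e_1^*(x_j)$, transverse to $E_j$, so $f$ is a local homeomorphism at $x$ iff $g|_{E_j}$ is. Summing the zero counts over all $j$ with $\langle \alpha_n, \varepsilon_j \rangle = 1$ and applying Lemma~\ref{lem:zeros} to $f$ itself yields the stated recursion modulo $2$, with the factor $\langle \alpha_n, \varepsilon_j \rangle$ encoding precisely the indices that contribute.

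The main obstacle is establishing (i)--(iii) simultaneously for some equivariant $g$. This is a routine but technical equivariant transversality argument: $(\Z/2)^k$ acts freely on the product of spheres, each stratum $E_j$ is invariant, and the relevant jet conditions are generic among equivariant maps, so a small equivariant perturbation of any given $g$ satisfies all three requirements at once.
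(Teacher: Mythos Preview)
Your argument is correct and is essentially the paper's proof: both define the last coordinate as $\prod_{j:\langle\alpha_n,\varepsilon_j\rangle=1} e_1^*(x_j)$, reduce the zero count of the full map to zero counts of $g$ on the strata $E_j$, and invoke Lemma~\ref{lem:zeros}. The only cosmetic difference is that the paper secures the genericity conditions (i)--(iii) via obstruction theory (building $g$ to be nonvanishing on the $(n-2)$-skeleton and to have finitely many zeros on the $(n-1)$-skeleton), whereas you phrase it as an equivariant transversality perturbation; both are standard for a free action.
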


\begin{proof} 
	Let $f\colon S^{n_1} \times \dots \times S^{n_k} \longrightarrow \bigoplus_{i=1}^{n-1} V_{\alpha_i}$ be an equivariant map such that for $(x_1, \dots, x_k) \in S^{n_1} \times \dots \times S^{n_k}$ with $e_1^*(x_j)e_{n_j+1}^*(x_j) = 0 = e_1^*(x_\ell)$ for $j \ne \ell$, $f(x_1, \dots, x_k) \ne 0$. Such a map exists by obstruction theory since $S(\bigoplus_{i=1}^{n-1} V_{\alpha_i})$ is $(n-3)$-connected. In the next step of the construction of~$f$, where we define $f$ on the $(n-1)$-skeleton, we can moreover ensure that $f$ has finitely many zeros $x$ with $e_1^*(x_j) = 0$ for some~$j$. This is because cell-by-cell a map $S^{n-2} \longrightarrow S^{n-2}$ can be extended to a map $B^{n-1} \longrightarrow \R^{n-1}$ with finitely many zeros. Now define
	$$F\colon S^{n_1} \times \dots \times S^{n_k} \longrightarrow \bigoplus_{i=1}^n V_{\alpha_i}, \qquad (x_1, \dots, x_k) \mapsto (f(x_1, \dots, x_k), \prod_{j : \langle \alpha_n, \varepsilon_j \rangle = 1} e_1^*(x_j)).$$
	The product in the last coordinate ensures that $F$ is also equivariant in the $V_{\alpha_n}$-component. Now observe that the zeros of $F$ in $B^{n_1} \times \dots \times B^{n_k}$ are in bijection with zeros of $f$ in $B^{n_1} \times \dots \times B^{n_k}$ that satisfy~${e_1^*(x_j) = 0}$, where $j$ ranges over indices with $\langle \alpha_n, \varepsilon_j \rangle = 1$. Using Lemma~\ref{lem:zeros} and reducing modulo~$2$ finishes the proof.
\end{proof}

\begin{remark}
\label{rem:boundary-cases}
The value of $r(n_1, \dots, n_k; V)$ is well-defined even in the case that some $n_i$ are~$0$. In this case the manifold $X = S^{n_1} \times \dots \times S^{n_k}$ splits into several connected components and thus $H_{n-1}(\partial B) \cong \Z^c$, where $c$ is the number of components, that is, $c = 2^t$ with $t$ the number of $S^0$-factors in~$X$. In this case we define the degree of the map $\widehat f \colon \partial B \longrightarrow S(V)$ to be $\widehat f_*(1,\dots, 1)$. Since this counts the parity of the number of zeros of $f$ in $B$ with signs and multiplicities, our results hold in the same way, even if some $n_i = 0$. Further, an equivariant map $S^{n_1} \times \dots \times S^{n_{k-1}} \times S^0 \longrightarrow S(V)$ exists if and only if an equivariant map $S^{n_1} \times \dots \times S^{n_{k-1}} \longrightarrow S(V)$ exists, where we forget the action of $\varepsilon_k$ to define the induced $(\Z/2)^{k-1}$-actions. This is true simply because $S^{n_1} \times \dots \times S^{n_{k-1}} \times S^0$ consists of two disjoint copies of $S^{n_1} \times \dots \times S^{n_{k-1}}$. For the nontrivial $\Z/2$-action on $\R$ we have that $r(1; \R) = 1$; this is the Intermediate Value Theorem. Using these observations and Theorem~\ref{thm:compute}, we can compute the value of the obstruction $r(n_1, \dots, n_k; V)$ by induction.
\end{remark}

Other Borsuk--Ulam results for products of spheres are corollaries of Theorem~\ref{thm:compute}: 

\begin{remark}
An immediate consequence of Theorem~\ref{thm:compute} is 
\begin{align}
\label{eq}
 r(n_1, \dots, n_k; W \oplus V_{\varepsilon_j}) = r(n_1, \dots, n_j-1, \dots, n_k; W)
\end{align}
and thus $r(n_1, \dots, n_k; V_{\varepsilon_1}^{\oplus n_1} \oplus \dots \oplus V_{\varepsilon_k}^{\oplus n_k}) = 1$. This implies that any $(\Z/2)^k$-map $S^{n_1} \times \dots \times S^{n_k} \longrightarrow V_{\varepsilon_1}^{\oplus n_1} \oplus \dots \oplus V_{\varepsilon_k}^{\oplus n_k}$ has a zero---a proof of this special case of Ramos' result using the cohomological index theory of Fadell and Husseini is due to Dzedzej, Idzik, and Izydorek~\cite{dzedzej1999}. 
\end{remark}

\begin{remark}
\label{rem:common1}
As another special case we remark that 
\begin{align}
 r(n_1,n_2; W\oplus V_{\varepsilon_1+\varepsilon_2}) = r(n_1-1,n_2; W) + r(n_1,n_2-1; W),
\end{align}
and thus $r(n_1,n_2; W\oplus V_{\varepsilon_1+\varepsilon_2}) = 1$ if and only if $r(n_1-1,n_2; W) \ne r(n_1,n_2-1; W)$. This is the same recursion that computes binomial coefficients $\binom{n_1+n_2}{n_1}$, and thus $r(n_1,n_2; V_{\varepsilon_1+\varepsilon_2}^{\oplus (n_1 + n_2)}) = 1$ if and only if $\binom{n_1+n_2}{n_1}$ is odd. This is the case if and only if the binary expansions of $n_1$ and $n_2$ do not share a common~$1$.
\end{remark}

\begin{remark}
Combining the observations of the previous two remarks shows that
$$r(3\cdot 2^t-1, 3\cdot 2^t-2 ; (V_{\varepsilon_1} \oplus V_{\varepsilon_2} \oplus V_{\varepsilon_1+\varepsilon_2})^{\oplus (2^{t+1}-1)}) = r(2^t, 2^t-1 ; V_{\varepsilon_1+\varepsilon_2}^{\oplus (2^{t+1}-1)}) = 1,$$
obstructing the existence of a $(\Z/2)^2$-equivariant map $S^{3n-1} \times S^{3n-2} \longrightarrow S((V_{\varepsilon_1} \oplus V_{\varepsilon_2} \oplus V_{\varepsilon_1+\varepsilon_2})^{\oplus (2n-1)})$ for $n = 2^t$. This is a result of Mani-Levitska, Vre\'cica, and \v Zivaljevi\'c~\cite{mani2006}.
\end{remark}

\section{Borsuk--Ulam theorems for Stiefel manifolds}

The purpose of this section is to derive our main result, Theorem~\ref{thm:main}, from the methods developed in the preceding section. Recall that for positive integers $k \le n$, $V_{n,k}$ denotes the Stiefel manifold of $k$ pairwise orthonormal vectors in~$\R^n$. In particular, $V_{n,k} \subset (S^{n-1})^k$ and $V_{n,k}$ is invariant under the action of $(\Z/2)^k$ and thus inherits an action of~$(\Z/2)^k$. We will strengthen the following result of Fadell and Husseini:

\begin{theorem}[Fadell and Husseini~\cite{fadell1988}]
\label{thm:fh}
	Every equivariant map 
	$$V_{n,k} \longrightarrow (V_{\varepsilon_1} \oplus \dots \oplus V_{\varepsilon_k})^{\oplus (n-k)}$$
	has a zero.
\end{theorem}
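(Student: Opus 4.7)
The plan is to reduce the theorem to the Borsuk--Ulam theorem for products of spheres developed in Section~2. First I would suppose for contradiction that $f \colon V_{n,k} \longrightarrow V$ is a nowhere-zero equivariant map with $V = (V_{\varepsilon_1} \oplus \dots \oplus V_{\varepsilon_k})^{\oplus (n-k)}$, and extend $f$ equivariantly to $\tilde f \colon (S^{n-1})^k \longrightarrow V$. Such an extension exists because the action of $(\Z/2)^k$ on $(S^{n-1})^k$ is free, $V_{n,k}$ is a closed invariant subcomplex, and $V$ is contractible; equivalently, take any continuous extension and average it over the group.

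The key observation is that each inner product $\langle x_i, x_j\rangle$, viewed as a function on $(S^{n-1})^k$, flips sign under each of $\varepsilon_i$ and $\varepsilon_j$, hence is equivariant for the representation $V_{\varepsilon_i + \varepsilon_j}$; collectively these $\binom{k}{2}$ functions vanish precisely on $V_{n,k}$. I would then form the augmented map
$$F \colon (S^{n-1})^k \longrightarrow V \oplus W, \qquad W = \bigoplus_{1 \le i < j \le k} V_{\varepsilon_i + \varepsilon_j}, \qquad F(x) = \big(\tilde f(x),\, (\langle x_i, x_j\rangle)_{i<j}\big),$$
whose zero set is exactly $\{x \in V_{n,k} : f(x) = 0\} = \emptyset$. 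Next, restrict $F$ to the $(\Z/2)^k$-invariant sub-product
$$X = S^{n-1} \times S^{n-2} \times \dots \times S^{n-k} \subset (S^{n-1})^k,$$
in which $S^{n-j}$ denotes the unit sphere of $\{x \in \R^n : x_1 = \dots = x_{j-1} = 0\}$. A short count gives $\dim X = k(n-k) + \binom{k}{2} = \dim(V\oplus W)$, so Lemma~\ref{lem:zeros} applied to the nowhere-zero restriction $F|_X$ forces
$$r(n-1, n-2, \dots, n-k;\, V \oplus W) = 0.$$

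It remains to compute this invariant and derive a contradiction by showing it actually equals $1$. Applying the recursion~\eqref{eq} repeatedly to strip off the $n-k$ copies of $V_{\varepsilon_j}$ from each slot reduces the problem to showing
$$R_k \;:=\; r\!\left(k-1, k-2, \dots, 1, 0;\; \bigoplus_{1 \le i < j \le k} V_{\varepsilon_i + \varepsilon_j}\right) \;=\; 1.$$
I would prove this by induction on $k$, applying Theorem~\ref{thm:compute} successively to the summands $V_{\varepsilon_i + \varepsilon_k}$ for $i < k$: because $n_k = 0$, only the term that reduces $n_i$ by one contributes in each step. After peeling off all $k-1$ of these summands, one is left with the value $R_{k-1}$ together with an extra trivial $S^0$ factor, which does not affect parity by Remark~\ref{rem:boundary-cases}. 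The base case $R_1 = r(0;\,0) = 1$ is the degenerate trivial case.

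The main obstacle will be this final combinatorial computation of $R_k$; the three geometric steps (equivariant extension, augmentation by inner products, and dimension-matching restriction to $X$) are each routine given the machinery of Section~2. As a consistency check, the same method should also prove the sharper Theorem~\ref{thm:main}, from which Theorem~\ref{thm:fh} follows because its codomain embeds $(\Z/2)^k$-equivariantly as a subrepresentation of the codomain in Theorem~\ref{thm:main}.
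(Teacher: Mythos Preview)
Your proposal is correct and follows the same essential pattern as the paper: extend $f$ equivariantly from $V_{n,k}$ to a product of spheres, augment by the inner products $(\langle x_i,x_j\rangle)_{i<j}$ to pick out $V_{n,k}$ as the zero locus, and reduce everything to the computation $r(k-1,k-2,\dots,0;\bigoplus_{|\alpha|=2}V_\alpha)=1$, which is exactly Lemma~\ref{lem:reduction}. The paper does not prove Theorem~\ref{thm:fh} directly; it instead proves the stronger Theorem~\ref{thm:main} via Theorem~\ref{thm:main2} and deduces Theorem~\ref{thm:fh} from the equivariant inclusion of codomains. The one genuine difference is your dimension-matching step: the paper keeps the full domain $(S^{n-1})^k$ and enlarges the codomain to $V_{\varepsilon_1}^{\oplus(n-1)}\oplus\cdots\oplus V_{\varepsilon_k}^{\oplus(n-k)}$, whereas you keep the codomain $(V_{\varepsilon_1}\oplus\cdots\oplus V_{\varepsilon_k})^{\oplus(n-k)}$ and shrink the domain to $X=S^{n-1}\times S^{n-2}\times\cdots\times S^{n-k}$. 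Both routes strip the $V_{\varepsilon_j}$ summands via~\eqref{eq} and land on Lemma~\ref{lem:reduction}; your route proves Theorem~\ref{thm:fh} directly without passing through Theorem~\ref{thm:main}, while the paper's route yields the sharper statement in one stroke.

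One small point to tighten: in your induction for $R_k$ you invoke Theorem~\ref{thm:compute} on $V_{\varepsilon_i+\varepsilon_k}$ while $n_k=0$, and assert that the term reducing $n_k$ to $-1$ vanishes. The paper never defines $r$ for negative indices, and its own Proof~1 of Lemma~\ref{lem:reduction} avoids this by first invoking Remark~\ref{rem:boundary-cases} to drop the $S^0$ factor (so that $V_{\varepsilon_i+\varepsilon_k}$ becomes $V_{\varepsilon_i}$ as a $(\Z/2)^{k-1}$-representation) and only then applying~\eqref{eq}. Either adopt that order of operations, or state the convention $r(\dots,-1,\dots;\cdot)=0$ explicitly.
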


The dimension of $V_{n,k}$ exceeds the dimension of the codomain $(V_{\varepsilon_1} \oplus \dots \oplus V_{\varepsilon_k})^{\oplus (n-k)}$ by~$\binom{k}{2}$. We give two strengthenings of Theorem~\ref{thm:fh}, where the dimensions of domain and codomain coincide. This is possible by restricting the domain to an invariant submanifold of codimension~$\binom{k}{2}$ (this is achieved by Corollary~\ref{cor:main}), or by mapping to a larger codomain instead while still guaranteeing the existence of a zero---this is Theorem~\ref{thm:main}, which is already implicit in~\cite{fadell1988}. Moreover, we prove a generalized result that applies to arbitrary $(\Z/2)^k$-actions on the codomain; see Theorem~\ref{thm:main2}.

For $\alpha \in (\Z/2)^k$ denote by $|\alpha|$ the $\ell_1$-norm of~$\alpha$, that is, the number of non-zero entries. The dimension of $\bigoplus_{|\alpha| = 2} V_\alpha$ is $\binom{k}{2} = \sum_{i=0}^{k-1} i$. We will need the following:

\begin{lemma}
\label{lem:reduction}
	$r(k-1,k-2,\dots,1,0; \bigoplus_{|\alpha| = 2} V_\alpha) = 1$
\end{lemma}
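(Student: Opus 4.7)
The plan is to induct on $k$, writing $W_m = \bigoplus_{|\alpha|=2,\,\alpha\in(\Z/2)^m} V_\alpha$ throughout.

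For the base case $k=2$, I would establish $r(1,0;V_{\varepsilon_1+\varepsilon_2})=1$ either from Theorem~\ref{thm:compute} applied to the sole summand, which reduces the value to $r(0,0;0)+r(1,-1;0)$ (the second term vanishes because $S^{-1}$ is empty and the first equals $1$ since the unique point of $B^0 \times B^0$ is a zero of the only map into $\{0\}$), or equivalently from Remark~\ref{rem:boundary-cases}, which recasts the question as the one-dimensional Borsuk--Ulam statement $r(1;V_{\varepsilon_1})=1$ via the Intermediate Value Theorem.

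For the inductive step, assume the claim for $k-1$. First I would reorder $W_k$ so that the $k-1$ summands $V_{\varepsilon_1+\varepsilon_k},\dots,V_{\varepsilon_{k-1}+\varepsilon_k}$ involving $\varepsilon_k$ occupy the last positions of the direct sum, then apply Theorem~\ref{thm:compute} once per such summand to peel them off one at a time. Peeling $V_{\varepsilon_i+\varepsilon_k}$ yields two terms in the recursion, one decrementing $n_i$ and one decrementing $n_k$; since $n_k=0$ is never modified on any surviving branch (the only operations that would change it are the dead branches themselves), the $n_k$-term requires $n_k-1=-1$ and contributes $0$, leaving only the $n_i$-decrement. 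After all $k-1$ applications, the index tuple $(k-1,k-2,\dots,1,0)$ has been reduced to $(k-2,k-3,\dots,0,0)$, and the remaining codomain is exactly $W_{k-1}$, now with $\varepsilon_k$ acting trivially.

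To conclude, I would use Remark~\ref{rem:boundary-cases} to discard the trailing $S^0$-factor. Because $\varepsilon_k$ acts trivially on $W_{k-1}$ and $\partial B^0=\emptyset$, the boundary $\partial B$ identifies with $\partial(B^{k-2}\times\cdots\times B^0)\times\{+1\}$, so the induced map $\widehat f$ is determined by its restriction to the $+1$-component and has the same mod-$2$ degree as $r(k-2,k-3,\dots,0;W_{k-1})$ viewed as a $(\Z/2)^{k-1}$-equivariant problem. The inductive hypothesis then delivers the value $1$.

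The step that requires the most care is verifying that the recursion of Theorem~\ref{thm:compute} correctly treats degenerate branches with $n_j-1=-1$ as contributing zero (either $S^{-1}$ makes the restricted domain empty, or equivalently the constraint $e_1^*(x_j)=0$ on $S^0$ is never met), and that the Remark's reduction preserves the mod-$2$ degree rather than only existence; the latter is transparent from the identification of $\partial B$ with one copy of $\partial(B^{k-2}\times\cdots\times B^0)$ when the final factor is $B^0$ and $\varepsilon_k$ acts trivially on the codomain.
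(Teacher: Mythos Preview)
Your inductive argument is correct and essentially mirrors the paper's first proof, which also reduces from $k$ to $k-1$ by peeling off the summands involving $\varepsilon_k$ and dropping the trailing $S^0$; the only difference is that the paper removes the $S^0$ factor first, so that under the residual $(\Z/2)^{k-1}$-action each $V_{\varepsilon_i+\varepsilon_k}$ becomes $V_{\varepsilon_i}$ and the one-term identity $r(\dots;W\oplus V_{\varepsilon_j})=r(\dots,n_j-1,\dots;W)$ applies without any degenerate $n_k-1=-1$ branch to dispose of. The paper also offers a second, non-inductive proof via the explicit equivariant map $(x_1,\dots,x_k)\mapsto(\langle x_i,x_j\rangle)_{i<j}$ on $S^{k-1}\times\cdots\times S^1\times S^0$, which has exactly one zero up to symmetry.
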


We provide two short proofs of this lemma. The first proof using the recursive formula of Theorem~\ref{thm:compute}, the second by exhibiting an appropriate equivariant map with an odd number of zeros in a fundamental domain of the $(\Z/2)^k$-action. 

\begin{proof}[Proof 1 of Lemma~\ref{lem:reduction}]
	Let $W = \bigoplus_{\alpha \in (\Z/2)^{k-1}, |\alpha| = 2} V_\alpha$ and $U = \bigoplus_{j=1}^{k-1} V_{\varepsilon_j}$. By Remark~\ref{rem:boundary-cases} and using Equation~(\ref{eq}) above $k-1$ times,
	$$r(k-1,k-2,\dots,1,0; \bigoplus_{|\alpha| = 2} V_\alpha) = r(k-1,k-2,\dots,1; W \oplus U) = r(k-2,k-3,\dots,1,0; W).$$
	Thus the lemma follows by induction on~$k$. The base case is the Intermediate Value Theorem.
\end{proof}

\begin{proof}[Proof 2 of Lemma~\ref{lem:reduction}]
	Consider a filtration $S^0 \subset S^1 \subset \dots \subset S^{k-1}$ obtained by successively intersecting $S^{k-1} \subset \R^k$ by coordinate hyperplanes. Then it is simple to check that the map
	$$S^{k-1} \times \dots \times S^1 \times S^0 \longrightarrow \bigoplus_{|\alpha| = 2} V_\alpha, \qquad (x_1, \dots, x_k) \mapsto (\langle x_i, x_j \rangle)_{i < j}$$
	is equivariant and has exactly one zero up to symmetry.
\end{proof}

We can adapt the reasoning above to show a Borsuk--Ulam type theorem for Stiefel manifolds. The proof of the following theorem uses the same reasoning as~{\cite[Prop.~3.3]{blagojevic2018}}, which was previously used to derive Tverberg-type results in~\cite{blagojevic2014}.

\begin{theorem}
\label{thm:main2}
	Let $k \le n$ be integers and $m = k(n-1) - \binom{k}{2}$. Let $\alpha_1, \dots, \alpha_m \in (\Z/2)^k$, and denote $\bigoplus_{i=1}^m V_{\alpha_i}$ by~$V$. If $r(\underbrace{n-1, \dots, n-1}_\text{$k$ times}; V \oplus \bigoplus_{|\alpha| = 2} V_\alpha) = 1$ then there is no equivariant map $V_{n,k} \longrightarrow S(V)$.
\end{theorem}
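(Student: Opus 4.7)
The plan is to reduce Theorem~\ref{thm:main2} to the product-of-spheres obstruction from Section~2 by padding an equivariant map from $V_{n,k}$ into $S(V)$ with the natural map that measures failure of orthogonality, thereby producing an equivariant map $(S^{n-1})^k \to S(V \oplus \bigoplus_{|\alpha|=2} V_\alpha)$; such a map is forbidden by Lemma~\ref{lem:zeros} under the hypothesis $r=1$. Concretely, suppose for contradiction that an equivariant $f\colon V_{n,k} \to S(V)$ exists. First I would extend $f$ equivariantly (ignoring any zeros that appear off $V_{n,k}$) to a map $\tilde f\colon (S^{n-1})^k \to V$; such an extension exists because $V$ is a $(\Z/2)^k$-representation and hence a $(\Z/2)^k$-equivariant absolute retract, so equivariant extensions from the closed equivariant subspace $V_{n,k}$ always exist (alternatively, one may use a $(\Z/2)^k$-invariant tubular neighborhood of $V_{n,k}$ together with a $G$-invariant bump function).

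The key ingredient is the Gram-entry map
$$g \colon (S^{n-1})^k \longrightarrow \bigoplus_{|\alpha|=2} V_\alpha, \qquad (x_1, \dots, x_k) \longmapsto \bigl(\langle x_i, x_j \rangle\bigr)_{i < j},$$
where the $(i,j)$-coordinate is placed in the $V_{\varepsilon_i + \varepsilon_j}$-summand. This is equivariant because $\langle x_i, x_j \rangle$ changes sign under $\varepsilon_\ell$ if and only if $\ell \in \{i,j\}$, which is exactly the condition $\langle \varepsilon_\ell, \varepsilon_i + \varepsilon_j \rangle = 1$; moreover, by definition $g^{-1}(0) = V_{n,k}$. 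Combining the two, I set
$$F = (\tilde f, g) \colon (S^{n-1})^k \longrightarrow V \oplus \bigoplus_{|\alpha|=2} V_\alpha.$$
Any zero of $F$ would satisfy $g=0$, forcing the point into $V_{n,k}$; but on $V_{n,k}$ the extension $\tilde f$ agrees with $f$ and so lies in $S(V)$, which is disjoint from $0$. Hence $F$ is nowhere zero, and normalizing yields an equivariant map $(S^{n-1})^k \to S(V \oplus \bigoplus_{|\alpha|=2} V_\alpha)$, contradicting $r(n-1, \dots, n-1; V \oplus \bigoplus_{|\alpha|=2} V_\alpha) = 1$ via Lemma~\ref{lem:zeros}.

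The substantive observation, and really the only creative step, is that the Gram map $g$ fits the situation on two counts at once: it cuts out $V_{n,k}$ inside $(S^{n-1})^k$ by exactly the right number of equations, and it transforms under the characters $\varepsilon_i + \varepsilon_j$, so that its codomain is precisely the representation $\bigoplus_{|\alpha|=2} V_\alpha$ of codimension $\binom{k}{2}$ featured in the hypothesis. Once this is noticed, the only technical issue is the equivariant extension of $f$, which is routine. The dimensions are consistent throughout: $\dim V_{n,k} = k(n-1) - \binom{k}{2} = m = \dim V$, and $\dim (S^{n-1})^k = k(n-1) = \dim\bigl(V \oplus \bigoplus_{|\alpha|=2} V_\alpha\bigr)$, matching the setup of Section~2 exactly.
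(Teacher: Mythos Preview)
Your argument is correct and essentially identical to the paper's: both extend $f$ equivariantly to $(S^{n-1})^k$, pair it with the Gram map $(x_1,\dots,x_k)\mapsto(\langle x_i,x_j\rangle)_{i<j}$ into $\bigoplus_{|\alpha|=2}V_\alpha$, and observe that the resulting map has no zeros, forcing $r=0$. The only cosmetic difference is in justifying the extension: you invoke the equivariant absolute retract property of the linear representation~$V$ (or a tubular neighborhood), whereas the paper writes down an explicit Gram--Schmidt-type formula for $f'$, after first noting that the obstruction-theoretic argument you give would also work.
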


\begin{proof}
	We prove the contrapositive. Given an equivariant map $f\colon V_{n,k} \longrightarrow S(V)$, extend it to an equivariant map $f'\colon (S^{n-1})^k \longrightarrow V$.
	We could argue that this extension exists by exhibiting a cell complex for $(S^{n-1})^k$ that respects the $(\Z/2)^k$-action and has $V_{n,k}$ as a subcomplex. Then by contractibility of $V$ and the action of $(\Z/2)^k$ being free, there is no obstruction to the existence of this extension. Here we instead give an explicit formula for~$f'$: Let $(x_1, \dots, x_k) \in (S^{n-1})^k$. To define $f'(x_1,\dots,x_k)$, first inductively define points $y_1, \dots, y_k \in \R^n$. Think of $S^{n-1}$ as the unit sphere in~$\R^n$. Set $y_1 = x_1$, and having defined $y_1, \dots, y_{j-1}$, let $y_j = \pi_j(x_j)$, where $\pi_j \colon \R^n \longrightarrow \R^n$ is the orthogonal projection onto the orthogonal complement of the subspace spanned by $x_1, \dots, x_{j-1}$. Further, whenever $y_j \ne 0$, let $y_j' = \frac{y_j}{|y_j|}$. Now if all $y_j$ are non-zero, define $f'(x_1, \dots, x_k)$ by $f(y_1',\dots, y_k')\cdot \prod |y_j|$, whereas if some $y_j = 0$, then let $f'(x_1,\dots,x_k) = 0$. This map is continuous: The map $f$ as a continuous map on the compact manifold $V_{n,k}$ is bounded and so are the $|y_j|$. As $(x_1,\dots,x_k)$ approaches a point where $x_j$ is in the subspace spanned by $x_1, \dots, x_{j-1}$ and thus $y_j$ approaches zero, the value of $f'(x_1, \dots, x_k)$ approaches zero as well. Moreover, $f'$ is equivariant: flipping $x_j$ to $-x_j$ leaves $y_\ell$ for $\ell \ne j$ unchanged and flips $y_j'$ to $-y_j'$ (so long as $y_j \ne 0$). The equivariance of $f'$ now follows from the equivariance of~$f$.
	
	Now define the equivariant map $$F\colon  (S^{n-1})^k \longrightarrow V \oplus  \bigoplus_{|\alpha| = 2} V_\alpha, \qquad (x_1, \dots, x_k) \mapsto f'(x_1,\dots,x_k) \oplus (\langle x_i, x_j \rangle)_{i < j}.$$
	The map $F$ does not have any zeros, since $F(x_1, \dots, x_k) = 0$ implies that $f'(x_1,\dots,x_k) = 0$ and the $x_i$ are mutually orthogonal, thus $(x_1,\dots,x_k) \in V_{n,k}$ is a zero of~$f$. As $F$ does not have any zeros, $r(n-1, \dots, n-1; V \oplus \bigoplus_{|\alpha| = 2} V_\alpha) = 0$.
\end{proof}

\begin{example}
	We provide an example of a consequence of Theorem~\ref{thm:main2}. Let $k = 2$ and let $n-1$ be a power of two. By Remark~\ref{rem:common1} we have $r(n-1,n-2; (V_{\varepsilon_1 + \varepsilon_2})^{\oplus(2n-3)}) = 1$ and thus, in particular, there is no $(\Z/2)^2$-map $V_{n,2} \longrightarrow S(V_{\varepsilon_1 + \varepsilon_2}^{\oplus(2n-4)})$.
\end{example}

We can now derive Theorem~\ref{thm:main} that any equivariant map $$V_{n,k} \longrightarrow V_{\varepsilon_1}^{\oplus(n-1)} \oplus V_{\varepsilon_2}^{\oplus(n-2)} \oplus \dots \oplus V_{\varepsilon_k}^{\oplus (n-k)}$$ has a zero.

\begin{proof}[Proof of Theorem~\ref{thm:main}]
	According to Theorem~\ref{thm:main2}, we need to show that $$r(\underbrace{n-1, \dots, n-1}_\text{$k$ times}; V \oplus \bigoplus_{|\alpha| = 2} V_\alpha) = 1,$$ where $V = V_{\varepsilon_1}^{\oplus(n-1)} \oplus V_{\varepsilon_2}^{\oplus(n-2)} \dots \oplus V_{\varepsilon_k}^{\oplus (n-k)}$. By Theorem~\ref{thm:compute}
	$$r(n-1, \dots, n-1; V \oplus \bigoplus_{|\alpha| = 2} V_\alpha) = r(k-1, k-2, \dots, 1,0; \bigoplus_{|\alpha| = 2} V_\alpha),$$
	which is equal to $1$ by Lemma~\ref{lem:reduction}.
\end{proof}

Lastly, we can strengthen Theorem~\ref{thm:fh} by showing that there always is a zero of an equivariant map from the Stiefel manifold~$V_{n,k}$ that lies in some proper fixed submanifold. Let $M = \{(x_1,\dots, x_k) \in S^{n-k} \times S^{n-k+1} \times \dots \times S^{n-1} \: : \: \langle x_i, x_j \rangle = 0 \quad \forall i \ne j\}$. In particular, $M$ is a $(\Z/2)^k$-invariant submanifold of $V_{n,k}$ of codimension~$\binom{k}{2}$.

\begin{corollary}
\label{cor:main}
	Any equivariant map $M \longrightarrow (V_{\varepsilon_1} \oplus \dots \oplus V_{\varepsilon_k})^{\oplus (n-k)}$ has a zero.
\end{corollary}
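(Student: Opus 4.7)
The plan is to adapt the contradiction argument of Theorem~\ref{thm:main2}: assuming an equivariant map $f : M \to S(V)$ exists with $V = (V_{\varepsilon_1} \oplus \dots \oplus V_{\varepsilon_k})^{\oplus(n-k)}$, I would first extend $f$ equivariantly to a map $f' : S^{n-k} \times S^{n-k+1} \times \dots \times S^{n-1} \to V$. Such an extension exists by equivariant obstruction theory, since $V$ is equivariantly contractible and the $(\Z/2)^k$-action on the product of spheres is free; alternatively, one can write $f'$ down explicitly in Gram--Schmidt fashion, projecting each $x_j$ onto the orthogonal complement of $x_1, \dots, x_{j-1}$ inside $\R^{n-k+j}$, applying $f$ to the resulting frame in $M$, and rescaling by the product of projection norms (setting $f'$ to zero on configurations where some projection vanishes), just as in the proof of Theorem~\ref{thm:main2}. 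Then define the equivariant map
$$F : S^{n-k} \times \dots \times S^{n-1} \longrightarrow V \oplus \bigoplus_{|\alpha| = 2} V_\alpha, \qquad (x_1, \dots, x_k) \mapsto f'(x_1, \dots, x_k) \oplus (\langle x_i, x_j\rangle)_{i<j}.$$
Any zero of $F$ would force the $x_i$'s to be pairwise orthogonal and thus lie in $M$, and would then force $f(x_1,\dots,x_k) = 0$, contradicting $f(M) \subset S(V)$. Hence $F$ has no zero, so its normalization is an equivariant map into $S(V \oplus \bigoplus_{|\alpha|=2} V_\alpha)$, and Lemma~\ref{lem:zeros} yields $r(n-k, n-k+1, \dots, n-1; V \oplus \bigoplus_{|\alpha|=2} V_\alpha) = 0$.

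The task then reduces to showing that this $r$-value in fact equals $1$. Applying equation~(\ref{eq}) repeatedly to strip off the $V_{\varepsilon_j}$ summands --- there are $n-k$ copies of each $V_{\varepsilon_j}$, so $n_j$ drops from $n-k+j-1$ down to $j-1$ --- yields
$$r(n-k, n-k+1, \dots, n-1; V \oplus \bigoplus_{|\alpha|=2} V_\alpha) = r(0, 1, \dots, k-1; \bigoplus_{|\alpha|=2} V_\alpha).$$
The codomain $\bigoplus_{|\alpha|=2} V_\alpha$ is invariant under any permutation of the generators $\varepsilon_1, \dots, \varepsilon_k$, so a simultaneous relabeling of the generators and the domain factors shows that $r$ is symmetric in its arguments for this particular codomain. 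Reversing the tuple therefore gives
$$r(0, 1, \dots, k-1; \bigoplus_{|\alpha|=2} V_\alpha) = r(k-1, k-2, \dots, 1, 0; \bigoplus_{|\alpha|=2} V_\alpha) = 1$$
by Lemma~\ref{lem:reduction}, contradicting the previous paragraph and completing the proof.

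The main obstacle is justifying the equivariant extension $f'$: while obstruction theory makes this cheap, an explicit construction must be checked for continuity at degenerate configurations where some projection vanishes. This is handled exactly as in Theorem~\ref{thm:main2}, where boundedness of $f$ on the compact manifold $M$ drives the rescaled $f'$ continuously to zero at such points.
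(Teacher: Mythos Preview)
Your argument is correct but follows a genuinely different route from the paper's. The paper extends $f$ from $M$ only as far as $V_{n,k}$, then builds an explicit equivariant map $h\colon V_{n,k} \to V_{\varepsilon_1}^{\oplus(k-1)} \oplus \dots \oplus V_{\varepsilon_{k-1}}$ (given by selected coordinate functions) with $h^{-1}(0) = M$, and applies Theorem~\ref{thm:main} as a black box to the combined map $(f',h)$ to force a zero, which then must lie in~$M$. You instead extend $f$ all the way to the ambient product $S^{n-k}\times\dots\times S^{n-1}$, use the inner-product map $(\langle x_i,x_j\rangle)_{i<j}$ to cut out~$M$, and then compute the relevant $r$-value directly via equation~(\ref{eq}) and Lemma~\ref{lem:reduction}, together with a short symmetry observation. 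Your route is more self-contained in that it avoids invoking Theorem~\ref{thm:main} (and the attendant passage through $(S^{n-1})^k$), working throughout on the smaller product of spheres; the paper's route is shorter on the page because the $r$-computation has already been absorbed into Theorem~\ref{thm:main}, and its coordinate map $h$ is slightly more elementary than your Gram--Schmidt extension.
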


\begin{proof}
	Given any such equivariant map $f\colon M \longrightarrow (V_{\varepsilon_1} \oplus \dots \oplus V_{\varepsilon_k})^{\oplus (n-k)}$ extend it to an equivariant map $f'\colon V_{n,k} \longrightarrow (V_{\varepsilon_1} \oplus \dots \oplus V_{\varepsilon_k})^{\oplus (n-k)}$. This extension exists for the same reason as in the proof of Theorem~\ref{thm:main2}, but now the explicit construction is simpler: Think of $S^{n-k} \subset S^{n-k+1} \subset \dots \subset S^{n-1} \subset \R^n$ as a filtration of the unit sphere in $\R^n$ by successive equatorial spheres. Let $(y_1, \dots, y_k) \in V_{n,k}$ such that $y_j$ is orthogonal to all points in~$S^{n-k+j}$. Declare $f'(y_1, \dots, y_k)$ to be zero and extend linearly along geodesics to~$S^{n-k+j}$.
	
	There is an equivariant map
	$$h\colon V_{n,k} \longrightarrow V_{\varepsilon_1}^{\oplus(k-1)} \oplus V_{\varepsilon_2}^{\oplus(k-2)} \oplus \dots \oplus V_{\varepsilon_{k-2}}^{\oplus 2} \oplus V_{\varepsilon_{k-1}}$$ with $h^{-1}(0) = M$: Explicitly, the $V_{\varepsilon_\ell}^{\oplus(k-\ell)}$-component, $\ell < k$, of $h(x_1, \dots, x_k)$ is given by $$(e_n^*(x_\ell), e_{n-1}^*(x_\ell), \dots, e_{n-(k-\ell)+1}^*(x_\ell)).$$
	Then define the equivariant map
	$$F\colon  V_{n,k} \longrightarrow (V_{\varepsilon_1} \oplus \dots \oplus V_{\varepsilon_k})^{\oplus (n-k)} \oplus V_{\varepsilon_1}^{\oplus(k-1)} \oplus V_{\varepsilon_2}^{\oplus(k-2)} \oplus \dots \oplus V_{\varepsilon_{k-2}}^{\oplus 2} \oplus V_{\varepsilon_{k-1}}, x \mapsto (f'(x), h(x)).$$
	By Theorem~\ref{thm:main} the map $F$ has a zero. Now, as before, if $F(x) = 0$, then both $f'(x) = 0$ and $h(x) = 0$. The latter implies $x \in M$, while $f'(x) = 0$ means that $f(x) = 0$. 
\end{proof}

\section*{Acknowledgements}

We thank the two referees for their thorough reading of our manuscript and numerous thoughtful comments, which improved the exposition substantially.
These results were obtained during the \emph{Summer Program for Undergraduate Research} 2017 at Cornell University. The authors are grateful for the excellent research conditions provided by the program. The authors would like to thank Maru Sarazola for many insightful conversations and Pavle Blagojevi\'c for pointing out an additional relevant reference.

\bibliographystyle{amsplain}


\end{document}